\documentclass{article}%
\usepackage{amsmath}
\usepackage{amsfonts}
\usepackage{amsthm}
\usepackage{amssymb}
\usepackage{graphicx}%
\setcounter{MaxMatrixCols}{30}
\setlength{\textheight}{19.0cm} \setlength{\textwidth}{14.0cm}
 \newtheorem{thm}{Theorem}[section]
 \newtheorem{cor}[thm]{Corollary}
 \newtheorem{lem}[thm]{Lemma}
 
 \theoremstyle{definition}
 \newtheorem{defn}[thm]{Definition}
 \theoremstyle{remark}
 \newtheorem{rem}[thm]{Remark}
 \newtheorem*{ex}{Example}
 \numberwithin{equation}{section}
\usepackage{tikz,xypic} 
\usetikzlibrary{calc}
\usetikzlibrary{positioning}
\tikzset{
  treenode/.style = {align=center, inner sep=0pt, text centered,
    font=\sffamily},
 arn_n/.style = {treenode, circle, white, font=\sffamily\bfseries, draw=black,
    fill=white, text width=0.6ex},
  arn_p/.style = {treenode, circle, white, font=\sffamily\bfseries, draw=black,
    fill=white, text width=0.6ex},
}

\DeclareMathOperator{\Aut}{Aut}
\newcommand{\IP}{\ensuremath{I(P,\mathbb{F})}}
\newcommand{\A}{\ensuremath{\mathcal{A}}}
\newcommand{\B}{\ensuremath{\mathcal{B}}}
\newcommand{\FX}{\ensuremath{\mathbb{F}\langle X \rangle}}

\makeindex
\begin{document}

\title{Gradings on Incidence Algebras and their Graded Polynomial Identities}
\author{Humberto Luiz Talpo\\{\small Departamento de Matem\'{a}tica - Universidade Federal de S\~ao Carlos} \\{\small Rod. Washington Lu\'is, Km 235 - C.P. 676, 13565-905 S\~ao Carlos-SP, Brasil.}\\{\small \texttt{htalpo@ufscar.br}}
\and Waldeck Sch\"utzer\\{\small Departamento de Matem\'{a}tica - Universidade Federal de S\~ao Carlos}\\{\small Rod. Washington Lu\'is, Km 235 - C.P. 676, 13565-905 S\~ao Carlos-SP, Brasil.}\\{\small \texttt{waldeck@dm.ufscar.br}}}
\date{}
\maketitle

\begin{abstract}
Let $P$ a locally finite partially ordered set, $\mathbb{F}$ a field, $G$ a group, and $\IP$ the incidence algebra of $P$ over $\mathbb{F}$. We describe all the inequivalent elementary $G$-gradings on this algebra.
If $P$ is bounded, $\mathbb{F}$ is a infinite field of characteristic zero, and $\A,\B$ are both elementary $G$-graded incidence algebras satisfying the same $G$-graded polynomial identities, and the automorphisms group of $P$ acts transitively on the maximal chains of $P$, we show that $\A$ and $\B$ are graded isomorphic. 
\end{abstract}

\section{Introduction}
\quad Given a fixed field $\mathbb{F}$, Stanley \cite{Stanley} showed that if $P$ and $Q$ are both locally finite partially ordered sets whose incidence algebras $\IP$ and $I(Q,\mathbb{F})$ are isomorphic, then $P$ and $Q$ are isomorphic. Feinberg \cite{Feinberg} showed that if $P$ is a locally finite partially ordered set whose chains have length no greater than $n-1$, then the incidence algebra $\IP$ over an infinite field satisfies the standard polynomial identity of degree $2n$, the equivalent of the Amitsur-Levitzki theorem. Berele \cite{Berele} showed that the polynomial identities satisfied by $\IP$ under the conditions that $P$ is a bounded poset with bound $n$ and $\mathbb{F}$ is an infinite field, are precisely the polynomial identities satisfied by $I(C_n,\mathbb{F})$, where $C_n\subset P$ is a chain of length $n$.

The incidence algebra $\IP$ of a finite poset $P$ with $n$ elements over a field $\mathbb{F}$ has a natural embedding in $M_n(\mathbb{F})$ the algebra of $n\times n$ matrices over $\mathbb{F}$.
If $G$ is an abelian group, then, for the algebra $M_n(\mathbb{F})$, there are two important classes of $G$-gradings: the elementary gradings and the fine gradings. In \cite{Bahturin} it was proved that if $\mathbb{F}$ is an algebraically closed field, every $G$-grading on $M_n(\mathbb{F})$ is a tensor product of an elementary and a fine grading. For the algebra $UT_n(\mathbb{F})$, the algebra of $n\times n$ upper triangular matrices over $\mathbb{F}$, it was proved in \cite{Valenti} that if $\mathbb{F}$ is an algebraically closed field of characteristic zero then every finite $G$-grading is isomorphic to elementary one. Furthermore isomorphic gradings satisfy the same graded identities. In \cite{Plamen} the authors studied the elementary gradings on the algebra $UT_n(\mathbb{F})$ over an infinite field. They described these elementary gradings by means of the graded identities that they satisfy. Namely, they proved that there exist $|G|^ {n-1}$ non isomorphic elementary gradings on $UT_n(\mathbb{F})$ by a finite group $G$, and showed that non isomorphic gradings give rise to different graded identities.

In this work, we extend the notion of elementary grading to incidence algebras of arbitrary locally finite partially ordered sets,
and further describe all the inequivalent ones. We provide a grading
counting formula when $P$ and $G$ are finite. When $P$ is bounded and $\mathbb{F}$ is a infinite field of characteristic zero, we show that whenever $\A$ and $\B$ are both $G$-graded incidence algebras of $P$ over $\mathbb{F}$ satisfying the same $G$-graded polynomial identities and the automorphisms group of $P$ acts transitively on the maximal chains of $P$, then $\A$ and $\B$ are isomorphic as $G$-graded algebras.

\section{Preliminaries}
\quad A set $P$ with a binary relation $\preceq$ is a partially ordered set
(abbreviated \emph{poset}) if $\preceq$ is reflexive, transitive and
antisymmetric. A subset $Q$ of $P$ is a \emph{subposet} when, for $x,y\in Q$, it satisfies $x\preceq y$ in $Q$ if, and only if, $x\preceq y$ in $P$.
If $P$ and $Q$ are posets, a function $\varphi:P\to Q$ is \emph{order-preserving}
if $x\preceq y$ in $P$ implies $\varphi(x)\preceq\varphi(y)$. If $\varphi$ is a bijection whose inverse is also order-preserving, we say that $\varphi$ is an \emph{isomorphism} of posets. An \emph{automorphism} of $P$ is an isomorphism
of $P$ onto itself (with respect to the same relation).

A \emph{chain} is a poset $C$ such that for any $x,y\in C$, either $x\preceq y$ or $y\preceq x$. A finite chain with $n$ distinct elements is said to have \emph{length} $n$, and an infinite chain has infinite length. All finite chains of same length $n$ are isomorphic to the chain $C_n=\{1,2\ldots,n\}$ with the usual order. A chain $C$ in $P$ is \emph{maximal} if, given any chain $C'$ in $P$, $C\subset C'$ implies either $C=C'$ or $C=P$. Every chain in $P$ can be prolonged to a maximal chain. In particular, every element of $P$ belongs to a maximal chain.

A poset $P$ is \emph{bounded} if there is an integer $n$ such that every chain in $P$ has length no greater than $n$. Otherwise $P$ is \emph{unbounded}. It is worth noting that there do exist unbounded posets all whose chains are finite.

Given $x$ and $z$ in a poset $P$, the \emph{segment} from $x$ to $z$, denoted by $[x,z]$, is the subposet $[x,z]=\{y\in P|x\preceq y\preceq z\}$.
The segment has length $n$ it has a chain of length $n$ and any other chain in it has length no greater than $n$. A poset $P$ is \emph{locally finite} if every segment of $P$ is finite.

Two elements $x, y$ in a poset $P$ are \emph{connected} if, for some positive integer $n$, there exist elements $x=x_0,x_1,x_2,\dots,x_n=y$ in $P$ with either $x_{i-1}\preceq x_{i}$ or $x_{i}\preceq x_{i-1}$ for $i=1,\dots,n$. Connectedness is an equivalence relation in $P$ whose equivalence classes are called \emph{connected components}.

Let $P$ be a locally finite poset and $\mathbb{F}$ a field. The \textbf{incidence algebra} $\IP$ of $P$ over $\mathbb{F}$ is defined as the $\mathbb{F}$-algebra of all functions \\$f:P\times P\rightarrow \mathbb{F}$ such that $f(x,y)=0$ whenever $x\npreceq y$, under the operations:
\begin{eqnarray*}
(f_1+f_2)(x,y) & = & f_1(x,y)+f_2(x,y),\\
(kf)(x,y) & = & kf(x,y),\\
(f_1f_2)(x,y) & = & \sum_{x\preceq z\preceq y}f_1(x,z)f_2(z,y).
\end{eqnarray*}
for all $f_1,f_2\in I(P,\mathbb{F})$, $x,y,z\in P$ and $k\in \mathbb{F}$.
Since $P$ is locally finite, it is clear that the above summation is finite.
Whenever $Q$ is a subposet of $P$, $I(Q,\mathbb{F})$ is a subalgebra
of $\IP$ coinciding with the set of functions $f$ such that $f(x,y)=0$ whenever $x\notin Q$ or $y\notin Q$.

From now on, if the poset $P$ and the field $\mathbb{F}$ are fixed, to simplify notation we shall denote $\IP$ simply by $\A_P$. If there is
no risk of confusion, we shall drop the subscript $P$.

For each $x,y\in P$, $\A$ contains the function $e_{xy}$ such that
\[
e_{xy}(u,v)=\begin{cases}
1, & \mbox{if \ensuremath{x=u\preceq v=y} }\\
0, & \mbox{otherwise.}
\end{cases}
\]
It is straightforward to check that such functions satisfy
$e_{xy}f e_{uv}=f(y,v)e_{xv}$, for all $x,y,u,v\in P$ and $f\in\A$. In
particular, $e_{xx}fe_{yy}=f(x,y)e_{xy}$ and $e_{xy}e_{uv}=e_{xv}$ if $y=u$ and 0 otherwise, hence the $e_{xx}$ are idempotents. Moreover,
every function $f\in\A$ has a unique representation as an infinite sum of the form (cf. \cite{Doubillet-Rota-Stanley}):
\[
f = \sum_{x,y\in P}f(x,y)e_{xy}.
\]
The function $\delta=\sum_{x\in P}e_{xx}$ satisfies $\delta f = f\delta = f$ for all $f\in\A$ hence it is the unit in this algebra.

The group of all $\mathbb{F}$-automorphisms of $\A$ is denoted $\Aut(\A)$. If $r\in \A$ is invertible, then $r$ determines an automorphism $\psi_r \in \Aut(\A)$, given by $\psi_r(f)=rfr^{-1}$, for each $f\in \A$. Such an automorphism is called an \emph{inner automorphism}. 

Notice that $\psi_{r^{-1}}=(\psi_r)^{-1}$, and if $r_1$ and $r_2$ are invertible in $\A$, then $\psi_{r_1}\circ \psi_{r_2}=\psi_{r_1r_2}$.
Therefore, 
\[
\mathit{Inn}(\A) = \{\psi_r|r \text{ is invertible in } \A\}
\]
is a subgroup of $\Aut(\A)$.

A function $s \in \A$ is \emph{multiplicative} if whenever $x\preceq z\preceq y$ are elements of $P$, then
\[
s(x,y)=s(x,z)s(z,y)\in \mathbb{F}^{\times},
\]
where $\mathbb{F}^{\times}$ denote the multiplicative subgroups of $\mathbb{F}$.
Notice that if $s$ is multiplicative, then $s(x, x) = 1$ for each $x\in P$. In particular, $s$ is invertible.

Recall that the Hadamard product of $f_1,f_2\in \A$ is denoted by $f_1\ast f_2$ and is given by $(f_1 \ast f_2)(x, y) = f_1(x, y)f_2(x, y)$ for each pair of $x, y \in P$. If $s$ is multiplicative, we define
\[
\begin{array}{ccccc}
M_s & : & \A & \rightarrow & \A \\
 &  & f & \mapsto & s \ast f
\end{array}
\]
for each $f\in \A$. Note that $M_s$ is an automorphism of $\A$. Such an automorphism is called \emph{multiplicative}.
\begin{rem}\label{rem:auto:mult}
Denote by $\mathit{Mult}(A) = \{M_s|s \text{ is multiplicative }\}$ the set of all multiplicative automorphisms of $\A$. It is easy to show that this is a subgroup of $\Aut (\A)$.
\end{rem}
If $\sigma$ is an automorphism of $P$, then $\sigma$ induces an automorphism $\hat\sigma$ of
$\A$ given by $(\hat\sigma(f))(x, y) = f(\sigma^{-1}(x),\sigma^{-1}(y))$, for each $f\in \A$ and each
pair of elements $x, y \in P$. So $\Aut(P)$ can be seen, via the automorphism induced $\hat\sigma$, as a subgroup of $\Aut(\A)$.

In \cite[Theorem 5]{Baclawski}, was proved the following:
\begin{thm}\label{thm:decomp:automorphism}
Let $P$ be a locally finite poset. If $\varphi$ is an automorphism of $\A$, then $\varphi =\psi_r\circ M_s\circ \hat\sigma$ for some inner automorphism $\psi_r$, some multiplicative automorphism $M_s$ and unique automorphism $\sigma$ of $P$.
\end{thm}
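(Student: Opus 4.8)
\section*{Proof proposal}

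The plan is to reconstruct the three factors in the order in which they are visible algebraically: first the underlying poset automorphism $\sigma$ (read off from the action of $\varphi$ on a canonical commutative quotient), then an inner automorphism that straightens out the images of the diagonal idempotents, and finally a multiplicative automorphism recording the remaining scalars. Throughout I write $f_x=\varphi(e_{xx})$.

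\emph{Step 1 (extracting $\sigma$).} I would first isolate the ideal $J_0=\{f\in\A : f(x,x)=0 \text{ for all } x\in P\}$. Using the given formula $(f_1 f_2)(x,x)=f_1(x,x)f_2(x,x)$, the diagonal map $d\colon\A\to\prod_{x\in P}\mathbb{F}$, $d(f)=(f(x,x))_x$, is a surjective algebra homomorphism with kernel $J_0$, so $\A/J_0\cong\prod_{x}\mathbb{F}$. The intrinsic description $J_0=\{f : \delta-gf \text{ is invertible for every } g\in\A\}$, verified using that $f$ is invertible exactly when all $f(x,x)\neq0$, shows that $J_0$ is preserved by every automorphism; hence $\varphi$ descends to an automorphism $\bar\varphi$ of $\prod_x\mathbb{F}$. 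The minimal nonzero idempotents of $\prod_x\mathbb{F}$ are precisely the images $\bar e_{xx}=d(e_{xx})$, and a ring automorphism permutes them, producing a bijection $\sigma\colon P\to P$ with $\bar\varphi(\bar e_{xx})=\bar e_{\sigma(x)\sigma(x)}$, i.e.\ $f_x\equiv e_{\sigma(x)\sigma(x)}\pmod{J_0}$.

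\emph{Step 2 (straightening by an inner automorphism).} The families $\{f_x\}$ and $\{e_{\sigma(x)\sigma(x)}\}_x=\{e_{yy}\}_y$ are both complete systems of orthogonal idempotents, congruent term-by-term modulo $J_0$ by Step~1. I would exhibit the element $u=\sum_{x}e_{\sigma(x)\sigma(x)}f_x$: orthogonality gives $uf_x=e_{\sigma(x)\sigma(x)}u$, while the congruence forces $u(y,y)=1$ for every $y$, so $u$ is invertible. Setting $r=u^{-1}$ yields $f_x=r\,e_{\sigma(x)\sigma(x)}\,r^{-1}=\psi_r(\hat\sigma(e_{xx}))$ for all $x$. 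Order-preservation of $\sigma$ then drops out: since $f_x\A f_y=r\,e_{\sigma(x)\sigma(x)}\A e_{\sigma(y)\sigma(y)}\,r^{-1}$ and also $f_x\A f_y=\varphi(e_{xx}\A e_{yy})$, comparing gives $x\preceq y\iff e_{xx}\A e_{yy}\neq0\iff e_{\sigma(x)\sigma(x)}\A e_{\sigma(y)\sigma(y)}\neq0\iff\sigma(x)\preceq\sigma(y)$, so $\sigma\in\Aut(P)$.

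\emph{Step 3 (multiplicative factor and bookkeeping).} Put $\varphi'=\hat\sigma^{-1}\circ\psi_r^{-1}\circ\varphi$, which fixes every $e_{xx}$. For arbitrary $f$, applying $\varphi'$ to $e_{aa}fe_{bb}=f(a,b)e_{ab}$ yields, position-by-position, $\varphi'(f)(a,b)=s(a,b)f(a,b)$, where $\varphi'(e_{ab})=s(a,b)e_{ab}$ with $s(a,b)\neq0$ by injectivity; computing entrywise sidesteps any convergence issue with the infinite expansion of $f$. Applying $\varphi'$ to $e_{xz}e_{zy}=e_{xy}$ shows $s$ is multiplicative, so $\varphi'=M_s$ and $\varphi=\psi_r\circ\hat\sigma\circ M_s$. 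The conjugation identity $\hat\sigma\circ M_s=M_{s'}\circ\hat\sigma$ with $s'(x,y)=s(\sigma^{-1}x,\sigma^{-1}y)$ (again multiplicative) rewrites this as the asserted $\varphi=\psi_r\circ M_{s'}\circ\hat\sigma$. Uniqueness of $\sigma$ follows from Step~1, because both $\psi_r$ and $M_s$ act trivially on the minimal idempotents of $\A/J_0$ (for $\psi_r$ since $(re_{xx}r^{-1})(y,y)$ equals $1$ when $y=x$ and $0$ otherwise), so the permutation induced by $\bar\varphi$ is exactly $\sigma$.

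I expect the main obstacle to lie entirely in the locally finite/infinite regime. In the finite-dimensional case Steps~1 and~2 reduce to the classical facts that the radical is the diagonal-vanishing ideal and that two complete systems of orthogonal idempotents agreeing modulo the radical are conjugate by a unit congruent to $\delta$. The real work is to make these survive when $P$ is infinite: establishing that $J_0$ is characteristic without invoking finite dimension, checking that $u$ is a legitimate element of $\A$ (each matrix position receives exactly one summand, namely $u(a,b)=f_{\sigma^{-1}(a)}(a,b)$) and genuinely invertible, and justifying $\varphi'=M_s$ entrywise rather than through an illegitimate term-by-term application of $\varphi$ to an infinite sum.
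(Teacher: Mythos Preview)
The paper does not supply its own proof of this statement: it is quoted as \cite[Theorem~5]{Baclawski} and then invoked as a black box in the proof of Theorem~\ref{isomorphism}. There is therefore no in-paper argument to compare your proposal against.

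For what it is worth, your outline is correct and is essentially Baclawski's original argument. You identify $J_0$ as characteristic via the quasi-regularity description, conjugate the image idempotent system back to the standard one by the explicit unit $u=\sum_x e_{\sigma(x)\sigma(x)}f_x$ (your verification that $u(a,b)=f_{\sigma^{-1}(a)}(a,b)$ is exactly what is needed to make this a legitimate element of $\A$ when $P$ is infinite), and then read off the multiplicative $s$ entrywise from $\varphi'$. The only cosmetic wrinkle is that in Step~2 you write $\psi_r(\hat\sigma(e_{xx}))$ before $\sigma$ is known to preserve order, so $\hat\sigma$ is not yet an automorphism of $\A$; since that particular identity only uses $\hat\sigma(e_{xx})=e_{\sigma(x)\sigma(x)}$, which holds for any bijection $\sigma$, and you establish $\sigma\in\Aut(P)$ in the very next sentence, nothing is circular---but it would be cleaner to delay the notation $\hat\sigma$ until after order-preservation is proved.
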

\section{Elementary Gradings on Incidence Algebras}
\qquad Let $\mathbb{F}$ be a field, $R$ an associative $\mathbb{F}$-algebra,
and $G$ a (multiplicative) group. A $G$-grading on $R$ is a decomposition of $R$ as a direct sum of $\mathbb{F}$-vector subspaces $R=\oplus_{g\in G}R_g$ such that $R_gR_h\subseteq R_{gh}$ for every $g,h\in G$. The subspace $R_g$ is the $g$-th homogeneous component in the grading, and its elements are said to be homogeneous of degree $\deg r=g$. $R_1$ is the neutral (or identity) component of the grading and it always contains the identity $1$ of $R$.

The incidence algebra $\A=\IP$ of a finite poset $P$ with $n$ elements over a field $\mathbb{F}$ has a natural embedding in the algebra $M_n(\mathbb{F})$ of $n\times n$ matrices over $\mathbb{F}$. Furthermore, the elements
in $P$ can be labelled by the naturals in such a way that $x_i\preceq x_j$ in $P$ implies $i\leq j$, hence $\A$ can be identified with the subalgebra $UT_n(\mathbb{F})$ of the $n\times n$ upper triangular matrices in $M_n(\mathbb{F})$ via the correspondence $e_{x_ix_j}\mapsto E_{ij}$, where $E_{ij}$ is an elementary matrix.
\begin{defn}Let $P$ be a locally finite poset and $G$ a group. A $G$-grading on $\A$ is \textit{good} if every element $e_{xy}$ in $\A$ is homogeneous. If $P$ is finite, a good grading on $\A$ is \textit{elementary} if, for each $x_i\preceq x_j$ in $P$,
there are unique elements $g_i,g_j\in G$ satisfying $ e_{x_i x_j}\in \A_{g_i^{-1}g_j}.$
\end{defn}
Next we extend the notion of elementary grading to arbitrary incidence algebras. Let $G^P$ denote the set of all functions from $P$ to $G$ and, for $\theta\in G^P$, denote the image $\theta(x)$ of $x\in P$ simply by $\theta_x$.
\begin{lem}Let $P$ be a locally finite poset, $G$ a group, and $\theta\in G^P$. Then, for each $g\in G$,
\begin{equation}
\A^{\theta}(g) = \{f\in \A\,|\,f(x,y)=0,\text{ whenever $x\preceq y$ and $\theta_x^{-1}\theta_y\neq g$}\}
\end{equation}
is a subspace of $\A$ and the sum $\A^{\theta}$ of all such subspaces is direct. Furthermore,
$\A^{\theta}(g) \A^{\theta}(h)\subseteq \A^{\theta}(gh)$, for all $g,h\in G$.
\end{lem}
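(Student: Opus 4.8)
The plan is to verify all three assertions by the same elementary device: work \emph{pointwise}, evaluating functions at pairs $(x,y)$ with $x\preceq y$, and exploit the fact that the defining condition on $\A^{\theta}(g)$ constrains a function only at such pairs. The single conceptual observation underlying everything is that each basis function $e_{xy}$ (with $x\preceq y$) lies in exactly one component, namely $\A^{\theta}(\theta_x^{-1}\theta_y)$, since $e_{xy}(u,v)$ is nonzero only at $(u,v)=(x,y)$; thus the family $\{e_{xy}\mid x\preceq y\}$ is partitioned among the components according to the value $\theta_x^{-1}\theta_y$.

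For the subspace claim, I would take $f_1,f_2\in\A^{\theta}(g)$ and a scalar $k\in\mathbb{F}$, and observe that for any pair $x\preceq y$ with $\theta_x^{-1}\theta_y\neq g$ one has $(f_1+kf_2)(x,y)=f_1(x,y)+kf_2(x,y)=0$, since both values vanish by hypothesis; hence $f_1+kf_2\in\A^{\theta}(g)$. For directness, I would take a family $f_g\in\A^{\theta}(g)$ with only finitely many nonzero and $\sum_g f_g=0$, and evaluate at an arbitrary pair $x\preceq y$. Every summand $f_g(x,y)$ vanishes except possibly the one with $g=\theta_x^{-1}\theta_y$, so the relation forces $f_{\theta_x^{-1}\theta_y}(x,y)=0$. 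As this holds for all $x\preceq y$, and each $f_g$ is supported on pairs with $\theta_x^{-1}\theta_y=g$, every $f_g$ is identically zero, so the sum is direct.

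For the grading condition, I would take $f_1\in\A^{\theta}(g)$ and $f_2\in\A^{\theta}(h)$, fix a pair $x\preceq y$ with $\theta_x^{-1}\theta_y\neq gh$, and expand
\[
(f_1f_2)(x,y)=\sum_{x\preceq z\preceq y}f_1(x,z)f_2(z,y).
\]
In any surviving term one needs $f_1(x,z)\neq 0$ and $f_2(z,y)\neq 0$, which force $\theta_x^{-1}\theta_z=g$ and $\theta_z^{-1}\theta_y=h$; multiplying gives $\theta_x^{-1}\theta_y=\theta_x^{-1}\theta_z\,\theta_z^{-1}\theta_y=gh$, contradicting the hypothesis. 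Hence every term vanishes and $f_1f_2\in\A^{\theta}(gh)$.

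None of the three steps presents a genuine obstacle; the computations are mechanical once the pointwise viewpoint is adopted. The only point deserving mild care is directness in the locally finite, possibly infinite, setting — but since directness is tested against finite sums and checked coordinatewise at each pair $(x,y)$, the infinitude of $P$ causes no difficulty.
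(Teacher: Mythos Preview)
Your proposal is correct and matches the paper's approach: the paper deems the subspace and directness claims routine and only writes out the third claim, arguing exactly as you do that if $\theta_x^{-1}\theta_y\neq gh$ then for each $z\in[x,y]$ either $\theta_x^{-1}\theta_z\neq g$ or $\theta_z^{-1}\theta_y\neq h$, so every term $f_1(x,z)f_2(z,y)$ vanishes. Your additional verification of the first two claims is more thorough than the paper but entirely in the same spirit.
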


\begin{proof}
It is enough to check the third claim. Let $g,h\in G$, $f_1\in \A^{\theta}(g)$, $f_2\in \A^{\theta}(h)$, $x,y\in P$, $x\preceq y$ be arbitrary. For each $z\in [x,y]$ such that
$\theta_x^{-1}\theta_y = (\theta_x^{-1}\theta_z)(\theta_z^{-1}\theta_y)\neq gh$ then $\theta_x^{-1}\theta_z \neq g$ or $\theta_z^{-1}\theta_y \neq h$, so $f_1(x,z)=0$ or $f_2(z,y)=0$. Therefore $f_1f_2\in \A^{\theta}(gh)$.
\end{proof}

\begin{defn}Let $P$ be a locally finite poset and $G$ a group. A $G$-grading on $\A=\oplus_{g\in G}\A_g$ is \textit{elementary} if there exists $\theta\in G^P$ such that $\A_g=\A^{\theta}(g)$ for all $g\in G$ (in which case $\A=\A^{\theta}$).
\end{defn}

Let $\theta\in G^P$ and consider $G_\theta = \{ \theta_x^{-1}\theta_y\,|\,x,y\in P,\,x\preceq y\}\subset G.$ Define
\[
 G_e^P =\{ \theta\in G^{P}\,|\,\text{$G_\theta$ is finite}\}.
\]

\begin{thm}\label{thm:grading:charac}Let $P$ be a locally finite poset, $G$ a group. Then, $\A$ admits an elementary grading for some $\theta\in G^P$ if, and only if, $\theta\in G_e^P$. Moreover, every elementary grading is
good.
\end{thm}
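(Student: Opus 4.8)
The plan is to isolate the single obstruction separating a formal degree-decomposition of elements of $\A$ from an honest $G$-grading, namely the finiteness requirement built into a direct sum. Given $\theta\in G^P$ and $f\in\A$, I would first record the canonical decomposition $f=\sum_{g\in G}f_g$, where $f_g$ agrees with $f$ on every comparable pair $x\preceq y$ with $\theta_x^{-1}\theta_y=g$ and vanishes elsewhere. By construction $f_g\in\A^{\theta}(g)$, and $f_g\neq 0$ forces a comparable pair $x\preceq y$ with $\theta_x^{-1}\theta_y=g$ and $f(x,y)\neq 0$, so the only indices $g$ that can occur lie in $G_\theta$. The whole argument then turns on whether this decomposition is a \emph{finite} sum, since membership in the internal direct sum $\A^{\theta}=\oplus_{g\in G}\A^{\theta}(g)$ is precisely the condition that only finitely many homogeneous components be nonzero.

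For sufficiency, I would assume $\theta\in G_e^P$, so $G_\theta$ is finite. Then for every $f\in\A$ the decomposition above reduces to the finite sum $f=\sum_{g\in G_\theta}f_g$, whence $f\in\A^{\theta}$ and therefore $\A=\A^{\theta}$. Combined with the containment $\A^{\theta}(g)\A^{\theta}(h)\subseteq\A^{\theta}(gh)$ from the preceding lemma, this exhibits $\A=\oplus_{g\in G}\A^{\theta}(g)$ as a $G$-grading, which is elementary by definition.

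For necessity, I would assume $\A=\A^{\theta}$ and produce a single element of $\A$ whose support meets every degree in $G_\theta$ simultaneously. The natural witness is the function $u\in\A$ with $u(x,y)=1$ whenever $x\preceq y$ and $u(x,y)=0$ otherwise. For each $g\in G_\theta$ there is a comparable pair $x\preceq y$ with $\theta_x^{-1}\theta_y=g$ and $u(x,y)=1\neq 0$, so $u_g\neq 0$ for every $g\in G_\theta$. Since $u$ belongs to $\A=\A^{\theta}$, it is a finite sum of homogeneous components, so only finitely many $u_g$ are nonzero; hence $G_\theta$ is finite, i.e.\ $\theta\in G_e^P$.

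Finally, for the ``moreover'' claim, I would observe that for any comparable pair $x\preceq y$ the element $e_{xy}$ vanishes off the single pair $(x,y)$, so $e_{xy}\in\A^{\theta}(\theta_x^{-1}\theta_y)$ lies entirely in one homogeneous component and is thus homogeneous; every elementary grading is therefore good. The only genuinely delicate point I anticipate is the interplay between the always-valid infinite representation $f=\sum_{x,y}f(x,y)e_{xy}$ in $\A$ and the finiteness demanded of a grading; the witness $u$ is exactly what converts that infinitude into the concrete finiteness of $G_\theta$.
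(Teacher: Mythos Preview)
Your proposal is correct and follows essentially the same route as the paper: the sufficiency argument is identical, the necessity argument uses the same witness (the zeta function, which the paper denotes $\zeta$), and the goodness of elementary gradings is the same one-line observation about the support of $e_{xy}$. The only cosmetic difference is that the paper extracts the homogeneous pieces of $\zeta$ via the idempotent sandwich $e_{xx}\zeta_i e_{yy}=\zeta_i(x,y)e_{xy}$ to pin down which $g_i$ occurs, whereas you appeal directly to the definition of $\A^{\theta}(g)$ and the uniqueness of direct-sum decompositions to identify your $u_g$ with the homogeneous components of $u$.
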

\begin{proof}
First assume that $\theta\in G_e^{P}$, so $G_\theta=\{g_1,\ldots,g_k\}$ is finite. Given $f\in \A$, for each $i=1,2,\ldots,k$, define
\[ f_i(x,y) = 
\begin{cases}
f(x,y), & \text{if $x\preceq y$ and $\theta_x^{-1}\theta_y=g_i$}\\
0, & \text{otherwise.}
\end{cases}
\]
It is clear that $f_i\in \A^{\theta}(g_i)$. Also, given $x\preceq y$ in $P$, $\theta_x^{-1}\theta_y=g_j$, for some index $j$, hence
\[
\sum_{i=1}^k f_i(x,y) = f_j(x,y) = f(x,y).
\]
It follows that $\A\subset \A^{\theta}$, thus $\A = \oplus_{g\in G}\A^{\theta}(g)$ is an elementary $G$-grading. 

On the other hand, suppose that $\A = \A^{\theta}$, so $\A$ is endowed with an elementary $G$-grading, and $\A_g=\A^{\theta}(g)$ for all $g\in G$.
For any $x\preceq y$ and $u\preceq v$ in $P$, $\theta_x^{-1}\theta_y\neq \theta_u^{-1}\theta_v$ implies $u\neq x$ or $v\neq y$, so $e_{xy}(u,v)=0$.
This shows $e_{xy}\in\A_{\theta_x^{-1}\theta_y}$, thus the grading
is good. 

The algebra $\A$ sure contains the function $\zeta$ given by $\zeta(x,y)=1$ if $x\preceq y$ in $P$, and 0 otherwise.
It has a unique homogeneous decomposition
$\zeta=\zeta_1+\zeta_2+\cdots+\zeta_k$, where the $g_i \in G$ are distinct and the $\zeta_i\in \A_{g_i}$ are nonzero, for all $i$. If $\A_g\neq 0$, then there exist $x\preceq y$ in $P$, such that $g=\theta_x^{-1}\theta_y$. Since $\zeta(x,y)\neq 0$, then $\zeta_i(x,y)\neq 0$ for at least one index $i$, and $\zeta_i(x,y)e_{xy} = e_{xx}\zeta_i e_{yy} \in \A_1 \A_{g_i} \A_1\subset \A_{g_i}$,
so $e_{xy}\in \A_{g}\cap\A_{g_i}$, and $g=g_i$. It follows that $G_\theta\subset \{g_1,\ldots,g_k\}$, thus $\theta \in G_e^P$.
\end{proof}

\begin{rem}\label{decompositionconected} If $P=\cup_{1\leq i\leq k}P_i$ is the decomposition of $P$ into its connected components, it is straightforward to check that
\[
\A^\theta_g=\bigoplus_{1\leq i\leq k} (\A^{\theta_i}_{P_i})_g,
\]
where each $\theta_i = \theta|_{P_i} \in G_e^{P_i}$ defines on $\A_{P_i}=I(P_i,\mathbb{F})$  the elementary grading induced by that on $A^\theta$. For each such $P_i$, the group $G$ acts on the left of $G^{P_i}$ via $(h\theta_i)(x) = h(\theta_i(x))$, for all $h\in G$, $x\in P_i$. This action obviously leaves $G_{\theta_i}$ invariant and $(\A_{P_i})^{\theta_i}_g=(\A_{P_i})^{h\theta_i}_g$ for all $g\in G$, thus it has no effect on the grading.
Further, this action extends naturally to an action of $G^k=G\times\cdots\times G$ on the left of
$G_e^P$ via $(\mathbf{h}\theta)(x)=(h_i\theta_i)(x)$, if $x\in P_i$,
for all $\mathbf{h}=(h_1,\ldots,h_k)\in G^k$ and $\A^{\theta}_g = \A^{\mathbf{h}\theta}_g$, for all $g\in G$.
\end{rem}

\begin{thm}
Let $P$ be a finite poset with $n$ elements and $G$ a finite group. If $k$ is the number of connected components of $P$, then there are $|G|^{n-k}$ distinct elementary gradings on $\A$.
\end{thm}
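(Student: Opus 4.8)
The plan is to realize the collection of elementary gradings as the quotient of $G^P$ by the free $G^k$-action of Remark~\ref{decompositionconected}, and then to count via the orbit--stabilizer principle. First I would note that since $P$ and $G$ are finite, every $\theta\in G^P$ has $G_\theta$ finite, so $G_e^P=G^P$; by Theorem~\ref{thm:grading:charac} the assignment $\theta\mapsto\A^\theta$ then maps $G^P$, a set of cardinality $|G|^n$, onto the set of all elementary $G$-gradings on $\A$. The problem thus reduces to deciding exactly when two functions $\theta,\theta'\in G^P$ yield the same grading, that is, when $\A^\theta(g)=\A^{\theta'}(g)$ for every $g\in G$.

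The key step --- and the main obstacle --- is to prove that $\A^\theta=\A^{\theta'}$ holds precisely when $\theta'$ lies in the $G^k$-orbit of $\theta$. Since every elementary grading is good (Theorem~\ref{thm:grading:charac}), each basis element $e_{xy}$ with $x\preceq y$ is homogeneous of degree $\theta_x^{-1}\theta_y$ in the $\theta$-grading. As the $e_{xy}$ form a basis of $\A$, the two decompositions coincide if and only if every $e_{xy}$ has the same degree in both, i.e.
\[
\theta_x^{-1}\theta_y=(\theta')_x^{-1}(\theta')_y\quad\text{whenever}\ x\preceq y.
\]
Putting $\psi_x:=(\theta')_x\theta_x^{-1}$, this is equivalent to $\psi_x=\psi_y$ for all comparable $x\preceq y$. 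Because comparability generates the connectedness relation, $\psi$ is forced to be constant on each connected component $P_i$; writing $\psi|_{P_i}\equiv h_i$ and $\mathbf h=(h_1,\dots,h_k)$ gives $(\theta')_x=h_i\theta_x$ for $x\in P_i$, which is exactly $\theta'=\mathbf h\theta$. The reverse implication is already provided by Remark~\ref{decompositionconected}. The only delicate points I anticipate are the bookkeeping for a noncommutative $G$ when passing from the pointwise ``difference'' $\psi$ to the left $G^k$-action, and the elementary verification that comparability steps suffice to connect any two elements of a connected component.

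Finally I would count the orbits. The $G^k$-action on $G^P$ is free: if $\mathbf h\theta=\theta$, then $h_i\theta_x=\theta_x$ for any $x$ in the nonempty component $P_i$, forcing $h_i=1$ for all $i$. Hence every orbit has exactly $|G|^k$ elements, and by the previous step the distinct elementary gradings correspond bijectively to these orbits. Their number is therefore $|G^P|/|G^k|=|G|^n/|G|^k=|G|^{n-k}$, as claimed.
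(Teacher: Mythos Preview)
Your argument is correct and follows the same overall strategy as the paper: identify the set of elementary gradings with the set of $G^k$-orbits in $G^P$ and then count orbits. There are two differences worth noting. First, the paper simply asserts that distinct orbits give distinct gradings and then invokes Burnside's Lemma, observing that $\sum_{\mathbf h}|X^{\mathbf h}|=|G|^n$ (which amounts to tacitly using freeness, since only $\mathbf h=\mathbf 1$ contributes); you instead supply an explicit proof of the converse direction via the function $\psi_x=(\theta')_x\theta_x^{-1}$ and its constancy on connected components, and then count by orbit--stabilizer after checking freeness directly. Second, because $P$ is finite here, your appeal to the basis $\{e_{xy}\}$ to reduce equality of gradings to equality of degrees on basis elements is cleaner than working with the defining condition on $\A^\theta(g)$. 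Your version is thus a bit more self-contained, while the paper's is terser; both unwind to the same computation $|G|^n/|G|^k$.
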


\begin{proof}
Let $X=G^P=G^n$, $\mathbf{h}\in G^k$, and $\theta\in G^P$. Since $\theta$ and $\mathbf{h}\theta$ define the same elementary grading on $\A$, the number of distinct gradings is equal to the number $|X\backslash G^k|$ of $G^k$-orbits in $X$. It follows from Burnside's Lemma that
\[
|X\backslash G^k| = \frac{1}{|G^k|}\sum_{\mathbf{h}\in G^k}|X^{\mathbf{h}}|=\frac{|G|^n}{|G|^k}=|G|^{n-k},
\] 
where $X^{\mathbf{h}}=\{x\in X\,|\,\mathbf{h}x=x\}$.
\end{proof}

In \cite[Theorem 10]{Jones} the same result was proved using a different argument.

We also want to consider the action of $\Aut (P)$ on the right of $G^P$ given by $(\theta\sigma)(x)=\theta(\sigma^{-1}(x))$, for all $\sigma\in\Aut (P)$, $\theta\in G^P$ and $x\in P$. It is clear this action commutes with the
action of $G^k$ and it leaves $G_\theta$ invariant, hence it induces an
action on the right of $G_e^P$.

\begin{thm}\label{graded isomorphism} If $P$ is a locally finite poset with $k$ connected components, $G$ is a group, $\theta\in G^P_e$, and $\sigma\in \Aut (P)$, then the induced automorphism $\hat\sigma$ of $\A$ is a graded isomorphism from $\A^\theta$ onto $\A^{\theta\sigma}$.
\end{thm}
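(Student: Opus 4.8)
The plan is to reduce everything to the single verification that $\hat\sigma$ respects the two gradings degree by degree, since the preliminaries already record that $\sigma\mapsto\hat\sigma$ embeds $\Aut(P)$ into $\Aut(\A)$; thus $\hat\sigma$ is already an $\mathbb{F}$-algebra automorphism of $\A$, and the only thing left is to show $\hat\sigma(\A^\theta(g))=\A^{\theta\sigma}(g)$ for every $g\in G$. Before doing so I would observe that the target grading actually exists: because the right action of $\Aut(P)$ on $G^P$ leaves $G_\theta$ invariant (indeed, as $(u,v)$ runs over the comparable pairs so does $(\sigma^{-1}(u),\sigma^{-1}(v))$, giving $G_{\theta\sigma}=G_\theta$), we have $\theta\sigma\in G_e^P$, and hence $\A^{\theta\sigma}=\bigoplus_{g\in G}\A^{\theta\sigma}(g)$ is a genuine elementary $G$-grading by Theorem~\ref{thm:grading:charac}.

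The core computation is the inclusion $\hat\sigma(\A^\theta(g))\subseteq\A^{\theta\sigma}(g)$ for each $g\in G$, where the only subtlety is the bookkeeping of inverses. Recall $(\hat\sigma(f))(x,y)=f(\sigma^{-1}(x),\sigma^{-1}(y))$ and $(\theta\sigma)_x=\theta_{\sigma^{-1}(x)}$, so the degree the $\theta\sigma$-grading assigns to a comparable pair $(x,y)$ is $(\theta\sigma)_x^{-1}(\theta\sigma)_y=\theta_{\sigma^{-1}(x)}^{-1}\theta_{\sigma^{-1}(y)}$, which is exactly the $\theta$-degree of the pair $(\sigma^{-1}(x),\sigma^{-1}(y))$. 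Now take $f\in\A^\theta(g)$ and a comparable pair $x\preceq y$ with $(\theta\sigma)_x^{-1}(\theta\sigma)_y\neq g$. Since $\sigma^{-1}$ is order-preserving, $\sigma^{-1}(x)\preceq\sigma^{-1}(y)$, and by the identity just noted $\theta_{\sigma^{-1}(x)}^{-1}\theta_{\sigma^{-1}(y)}\neq g$; hence $f(\sigma^{-1}(x),\sigma^{-1}(y))=0$ by the definition of $\A^\theta(g)$, i.e. $(\hat\sigma(f))(x,y)=0$. This is precisely the condition for $\hat\sigma(f)\in\A^{\theta\sigma}(g)$.

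For the reverse inclusion I would avoid a second hands-on computation and instead apply the containment already proved to the data $(\sigma^{-1},\theta\sigma)$ in place of $(\sigma,\theta)$. Since $\widehat{\sigma^{-1}}=(\hat\sigma)^{-1}$ (as $\sigma\mapsto\hat\sigma$ is a group embedding) and the right action satisfies $(\theta\sigma)\sigma^{-1}=\theta$, this yields $(\hat\sigma)^{-1}(\A^{\theta\sigma}(g))\subseteq\A^\theta(g)$, equivalently $\A^{\theta\sigma}(g)\subseteq\hat\sigma(\A^\theta(g))$. Combining the two inclusions gives $\hat\sigma(\A^\theta(g))=\A^{\theta\sigma}(g)$ for all $g\in G$, so the algebra automorphism $\hat\sigma$ is a graded isomorphism $\A^\theta\to\A^{\theta\sigma}$, as claimed. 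I do not anticipate a serious obstacle: the entire content lies in matching the inverse appearing in the definition of $\hat\sigma$ with the inverse appearing in the action $\theta\mapsto\theta\sigma$, and the number $k$ of connected components plays no role here beyond being part of the standing hypotheses.
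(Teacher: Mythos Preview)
Your proof is correct and follows essentially the same approach as the paper: both arguments hinge on the single degree computation $(\theta\sigma)_x^{-1}(\theta\sigma)_y=\theta_{\sigma^{-1}(x)}^{-1}\theta_{\sigma^{-1}(y)}$, and the paper's version (which squeezes by idempotents to pass through the elements $e_{xy}$) is just a repackaging of your direct function-value verification. Your write-up is in fact a bit more complete, since you explicitly justify $\theta\sigma\in G_e^P$ and the reverse inclusion, both of which the paper leaves implicit.
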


\begin{proof}
Let $0\neq f\in \A^\theta_g$ for some $g\in G$. Since $e_{xx}fe_{yy}=f(x,y)e_{xy}$, then $e_{xy}\in \A^\theta_g$ whenever $f(x,y)\neq 0$. Similarly,
\begin{align*}
e_{\sigma(x)\sigma(x)}\hat\sigma(f)e_{\sigma(y)\sigma(y)} &= \hat\sigma(f) (\sigma(x),\sigma(y))e_{\sigma(x)\sigma(y)} \\
&= f(\sigma^{-1}\sigma(x),\sigma^{-1}\sigma(y))e_{\sigma(x)\sigma(y)}\\
&= f(x,y)e_{\sigma(x)\sigma(y)}.
\end{align*}
Now,
\[
g=\theta(x)^{-1}\theta(y)=\theta(\sigma^{-1}\sigma(x))^{-1}\theta(\sigma^{-1}\sigma(y))=(\theta\sigma)(\sigma(x))^{-1}(\theta\sigma)(\sigma(y)),
\]
hence $e_{\sigma(x)\sigma(y)}\in \A^{\theta\sigma}_g$ whenever $f(x,y)\neq 0$.
Therefore, $\hat\sigma(f) \in \A^{\theta\sigma}_g$.
\end{proof}

Consider the following relation on $G_e^P$: $\theta\sim \mu \iff \theta=\mathbf{h}\mu\sigma$ for some $\mathbf{h}\in G^k$, $\sigma\in\Aut(p)$.
It is easy to check that this is an equivalence relation.

\begin{thm}\label{isomorphism} Let $P$ be a locally finite poset, $G$ a group, and $\theta,\mu\in G^P_e$. Then $\A^\theta\cong \A^\mu$ as elementary $G$-graded algebras if, and only if, $\theta\sim\mu$.
\end{thm}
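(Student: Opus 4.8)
The plan is to prove both implications, treating the converse ($\theta\sim\mu\Rightarrow\A^\theta\cong\A^\mu$) as the routine direction and the forward implication as the substantial one. For the converse, suppose $\theta=\mathbf{h}\mu\sigma$ with $\mathbf{h}\in G^k$ and $\sigma\in\Aut(P)$. By Remark~\ref{decompositionconected} the left action of $G^k$ has no effect on the homogeneous components, so $\A^\theta_g=\A^{\mathbf{h}\mu\sigma}_g=\A^{\mu\sigma}_g$ for every $g\in G$; that is, $\A^\theta$ and $\A^{\mu\sigma}$ are literally the same graded algebra. On the other hand, Theorem~\ref{graded isomorphism} gives that $\hat\sigma\colon\A^\mu\to\A^{\mu\sigma}=\A^\theta$ is a graded isomorphism, and composing with the identity yields $\A^\mu\cong\A^\theta$.

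For the forward implication, let $\Phi\colon\A^\theta\to\A^\mu$ be a graded isomorphism. Since $\A^\theta$ and $\A^\mu$ share the same underlying algebra $\A=\IP$, the map $\Phi$ is in particular an $\mathbb{F}$-automorphism of $\A$, so Theorem~\ref{thm:decomp:automorphism} applies and I may write $\Phi=\psi_r\circ M_s\circ\hat\sigma$ for an inner $\psi_r$, a multiplicative $M_s$, and a unique $\sigma\in\Aut(P)$. The strategy is to peel off $\hat\sigma$ and $M_s$, which carry the genuine poset symmetry and a harmless Hadamard rescaling, and to show that the surviving inner piece can only shift $\theta$ by the gauge freedom $G^k$. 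Concretely, $\hat\sigma\colon\A^\theta\to\A^{\theta\sigma}$ is graded by Theorem~\ref{graded isomorphism}, and $M_s$ fixes each homogeneous component of $\A^{\theta\sigma}$ because $M_s(e_{uv})=s(u,v)e_{uv}$ preserves the degree $(\theta\sigma)_u^{-1}(\theta\sigma)_v$. Hence $M_s\circ\hat\sigma\colon\A^\theta\to\A^{\theta\sigma}$ is a graded isomorphism, and therefore $\psi_r=\Phi\circ(M_s\circ\hat\sigma)^{-1}\colon\A^{\theta\sigma}\to\A^\mu$ is a graded isomorphism as well.

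Everything now reduces to the claim I expect to be the heart of the argument: if an inner automorphism $\psi_r$ is a graded map from $\A^\alpha$ to $\A^\beta$ (here $\alpha=\theta\sigma$ and $\beta=\mu$), then $\alpha_x^{-1}\alpha_y=\beta_x^{-1}\beta_y$ for all $x\preceq y$. To see this, fix $x\preceq y$; then $e_{xy}$ is homogeneous of degree $\alpha_x^{-1}\alpha_y$ in $\A^\alpha$, so $\psi_r(e_{xy})$ must be homogeneous of that same degree in $\A^\beta$. A direct computation with the incidence product gives $\psi_r(e_{xy})=re_{xy}r^{-1}=\sum_{u\preceq x,\;y\preceq w}r(u,x)r^{-1}(y,w)e_{uw}$, whose $e_{xy}$-coefficient is $r(x,x)r^{-1}(y,y)$. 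Since $r$ is invertible, evaluating $rr^{-1}=\delta$ at $(x,x)$ forces $r(x,x)\neq0$ and $r^{-1}(y,y)=r(y,y)^{-1}\neq0$, so $e_{xy}$ genuinely occurs in $\psi_r(e_{xy})$. As the grading on $\A^\beta$ is good, every $e_{uw}$ occurring in a homogeneous element shares its degree, whence $\beta_x^{-1}\beta_y=\alpha_x^{-1}\alpha_y$, proving the claim.

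Finally I would convert this into the desired relation. The equalities $\alpha_x^{-1}\alpha_y=\beta_x^{-1}\beta_y$ for all comparable pairs force $\beta_x\alpha_x^{-1}$ to be constant along comparabilities, hence on each connected component, so $\alpha=\mathbf{h}\beta$ for some $\mathbf{h}\in G^k$. With $\alpha=\theta\sigma$ and $\beta=\mu$ this reads $\theta\sigma=\mathbf{h}\mu$, and acting on the right by $\sigma^{-1}\in\Aut(P)$ gives $\theta=\mathbf{h}\mu\sigma^{-1}$, i.e.\ $\theta\sim\mu$. The main obstacle is the key claim of the third paragraph, and the crucial observation making it work is simply that the diagonal coefficient $r(x,x)r^{-1}(y,y)$ of $\psi_r(e_{xy})$ is nonzero, so that $e_{xy}$ always survives in its own image; the goodness of the target grading then pins down the degree and leaves no room beyond the gauge action $G^k$.
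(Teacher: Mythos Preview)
Your proof is correct and follows essentially the same approach as the paper: both directions are handled identically, decomposing the isomorphism via Theorem~\ref{thm:decomp:automorphism}, peeling off $\hat\sigma$ and $M_s$ as graded maps, and then using the nonvanishing diagonal coefficient $r(x,x)r^{-1}(y,y)$ of $\psi_r(e_{xy})$ to force $(\theta\sigma)_x^{-1}(\theta\sigma)_y=\mu_x^{-1}\mu_y$ before concluding via connectedness. The only cosmetic difference is that the paper extracts that coefficient by sandwiching with $e_{xx}(\,\cdot\,)e_{yy}$ rather than expanding $re_{xy}r^{-1}$ as a sum, but the content is the same.
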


\begin{proof}
If $\theta\sim\mu$, then $\mu=\mathbf{h}\theta\sigma$ for some $\mathbf{h}\in G^k$ ($k$ being the number of connected components of $P$) and $\sigma\in\Aut (P)$. Since $\A^\theta = \A^{\mathbf{h}\theta}$, it follows from the previous theorem that $\hat\sigma$ is a graded isomorphism between $\A^\theta$ and $\A^{\mathbf{h}\theta\sigma}=\A^{\mu}$.

On the other hand, suppose that $\varphi:\A^\theta\longrightarrow \A^\mu$ is a
graded isomorphism. From Theorem \ref{thm:decomp:automorphism}, we know that
$\varphi = \psi_r M_s \hat\sigma$ for some $r,s\in \A$ and unique $\sigma\in \Aut (P)$. Since $s\in \A$ is a multiplicative element, then $M_s$ is a graded isomorphism. Indeed, if $f\in \A^\theta_g$ for some $g\in G$, $x\preceq y$ in $P$, and $\theta_x^{-1}\theta_y\neq g$, then it follows from Theorem \ref{thm:grading:charac} that $M_s(f)(x,y)=s(x,y)f(x,y)=0$, hence $M_s(f)\in \A^\theta_g$. So far, we have that
\[        \xymatrix@-1.75pc{\A^\theta \ar[rr]^{\hat\sigma} \ar[dd]^{\varphi} & &
                \A^{\theta\sigma}\ar[dd]^{M_s}\\
                &  & \\
                \A^\mu  &  & \A^{\theta\sigma} \ar[ll]_{\psi_r}}
\]
is a commutative diagram where $\varphi$, $\hat\sigma$ and $M_s$ are graded isomorphisms, hence so is $\psi_r=\varphi \hat\sigma^{-1} M_s^{-1}$. Notice that $\hat\sigma^{-1}=\widehat{\sigma^{-1}}$ and $M_s^{-1}$ is multiplicative (cf. remark \ref{rem:auto:mult}). Since $r$ is invertible, 
\[
r(x,x)r^{-1}(y,y) e_{xy} = \psi_r(e_{xy})(x,y)e_{xy} = e_{xx}\psi_r(e_{xy})e_{yy},
\]
hence, for $e_{xy}\in \A^{\theta\sigma}_g$, then $\psi_r(e_{xy})\in \A^{\sigma\theta}_g \cap \A^\mu_g$, so $g=(\theta\sigma)_x^{-1}(\theta\sigma)_y=\mu_x^{-1}\mu_y$.

To see that $\theta\sim\mu$, first assume that $P$ has a single connected component, choose some $x_0\in P$ and define $h=\mu_{x_0}(\theta\sigma)^{-1}_{x_0}$. For $y\in P$ arbitrary, there is a sequence $x_0,x_1,\ldots,x_{m+1}=y$ in $P$ such that either $x_i\preceq x_{i+1}$ or $x_{i+1}\preceq x_i$, for all $i=0,1,\ldots,m$. In either case, we have $(\theta\sigma)_{x_i}^{-1}(\theta\sigma)_{x_{i+1}}=\mu_{x_i}^{-1}\mu_{x_{i+1}}$. It follows that $\mu_{x_0}=h(\theta\sigma)_{x_0}$, $\mu_{x_1}=\mu_{x_0}(\theta\sigma)_{x_0}^{-1}(\theta\sigma)_{x_1}=h(\theta\sigma)_{x_1}$, and so on, thus $\mu_y=h(\theta\sigma)_y$. Since $y$ is arbitrary, we conclude that $\mu= h\theta\sigma$. Proceeding in the same way for each connected component, we obtain $\mathbf{h}\in G^k$ such that $\mu=\mathbf{h}\theta\sigma$. This completes the proof.
\end{proof}

When $P$ and $G$ are finite, we can count the number of inequivalent elementary gradings on $\A$.

\begin{cor}If $P$ is a finite poset with $k$ connected components and $G$ is a finite group, then the number of inequivalent elementary gradings on $\A$ is
\[
n_G = \frac{1}{|G|^k|\Aut (P)|}\sum_{(\mathbf{h},\sigma)\in G\times \Aut (P)}|X^{(\mathbf{h},\sigma)}|
\]
where $X^{(\mathbf{h},\sigma)}=\{\theta\in G_e^P\,|\,\mathbf{h}\theta\sigma\sim \theta\}$.
\end{cor}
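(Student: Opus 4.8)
The plan is to realize the set of inequivalent elementary gradings as the orbit space of a single finite group acting on $G_e^P$, and then to invoke Burnside's Lemma, exactly as in the proof of the earlier counting theorem. First I would note that, since $P$ is finite, $G_\theta$ is automatically a finite subset of $G$ for every $\theta\in G^P$, so $G_e^P=G^P=G^n$ and this set has exactly $|G|^n$ elements. By Theorem \ref{isomorphism}, two elementary gradings $\A^\theta$ and $\A^\mu$ are isomorphic as $G$-graded algebras precisely when $\theta\sim\mu$; hence the number $n_G$ of inequivalent elementary gradings equals the number of $\sim$-equivalence classes in $G_e^P$.

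Next I would package $\sim$ as the orbit relation of a group action. Set $\Gamma=G^k\times\Aut(P)$ and let it act on $G_e^P$ by $\Phi_{(\mathbf{h},\sigma)}(\theta)=\mathbf{h}\theta\sigma$. The left $G^k$-action and the right $\Aut(P)$-action were already introduced before Theorem \ref{graded isomorphism}, and both leave $G_e^P$ invariant; the only thing to check is that together they give a genuine action of the product group, i.e. $\Phi_{(\mathbf{h}_1,\sigma_1)}\circ\Phi_{(\mathbf{h}_2,\sigma_2)}=\Phi_{(\mathbf{h}_1\mathbf{h}_2,\sigma_1\sigma_2)}$. This follows from the already-noted fact that the two actions commute, together with the identity $(\theta\sigma_2)\sigma_1=\theta(\sigma_1\sigma_2)$, which is immediate from the defining formula $(\theta\sigma)(x)=\theta(\sigma^{-1}(x))$. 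By the very definition of $\sim$, the $\Gamma$-orbit of $\theta$ coincides with its $\sim$-class, so the $\sim$-classes are exactly the $\Gamma$-orbits and $n_G$ equals the number of $\Gamma$-orbits in $G_e^P$.

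Finally I would apply Burnside's Lemma to this action. Since $|\Gamma|=|G|^k|\Aut(P)|$ and the fixed-point set of $(\mathbf{h},\sigma)$ is
\[
X^{(\mathbf{h},\sigma)}=\{\theta\in G_e^P\,|\,\mathbf{h}\theta\sigma=\theta\},
\]
the lemma yields
\[
n_G=\frac{1}{|\Gamma|}\sum_{\gamma\in\Gamma}|\mathrm{Fix}(\gamma)|=\frac{1}{|G|^k|\Aut(P)|}\sum_{(\mathbf{h},\sigma)\in G^k\times\Aut(P)}|X^{(\mathbf{h},\sigma)}|,
\]
which is the asserted formula.

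I do not expect a serious obstacle: the content is a bookkeeping application of Burnside once the correct group and action are identified, paralleling the proof already given for the number of \emph{distinct} (as opposed to inequivalent) gradings, where only the subgroup $G^k$ was used. The one point requiring genuine care is the verification that $(\mathbf{h},\sigma)\mapsto\Phi_{(\mathbf{h},\sigma)}$ is a homomorphism, so that $\Gamma$ really acts and the $\sim$-classes really are its orbits; and the observation that the relevant set $X^{(\mathbf{h},\sigma)}$ must be the set of \emph{honest} fixed points, those with $\mathbf{h}\theta\sigma=\theta$, since a point merely $\sim$-equivalent to its image imposes no condition at all.
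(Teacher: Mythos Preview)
Your argument is correct and is precisely the intended one: the paper states this as a corollary without proof, immediately after Theorem~\ref{isomorphism}, and the intended justification is exactly the Burnside count for the $G^k\times\Aut(P)$-action whose orbits are the $\sim$-classes, mirroring the earlier Burnside argument for the $G^k$-action. Your final remark is also on point: for Burnside's Lemma the set $X^{(\mathbf{h},\sigma)}$ must consist of genuine fixed points $\mathbf{h}\theta\sigma=\theta$; the condition $\mathbf{h}\theta\sigma\sim\theta$ written in the statement is vacuous (it holds for every $\theta$), and the summation index should read $G^k\times\Aut(P)$ rather than $G\times\Aut(P)$---both are evident typos in the paper.
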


For instance, for $P=C_n$ is a finite chain, then $\A\cong UT_n$, $\Aut (P)$ is trivial, $k=1$, and the number of inequivalent elementary gradings is $|G|^{n-1}$. 

\section{Graded Polynomial Identities on Incidence Algebras}
\qquad Now, we shall study the graded polynomial Identities on Incidence algebras. In \cite[Theorem 8.2.5]{Spiegel} it was proved that $\IP$ is a PI-algebra if and only if $P$ is bounded. Throughout this section we shall assume that the poset $P$ is locally finite and bounded. We shall also assume that $\mathbb{F}$ is an infinite field of characteristic zero and $G$ is a group.

Let $X=\cup_{g\in G} X_g$ be the union of the disjoint countable sets $X_g=\{x_1^g,x_2^g,\dots\}$. The free associative algebra $\FX$ freely generated over $\mathbb{F}$ by $X$ is equipped in a natural way with a structure of $G$-graded algebra, namely that in which $\deg(x_i^g)=g$ for every $x_i ^g\in X_g$, and $\deg(x_{i_1}^{g_1}x_{i_2}^{g_2}\dots,x_{i_t}^{g_t})=g_1g_2\dots g_t$. So $\FX$ is the free $G$-graded algebra freely generated by $X$.
Let $\Phi(x_{i_1}^{g_1},x_{i_2}^{g_2},\dots,x_{i_t}^{g_t})\in \FX$ be a polynomial. If $R=\oplus R_g$ is a $G$-graded algebra then $\Phi$ is a $G$-graded polynomial identity for $R$ if $\Phi(r_{i_1}^{g_1},r_{i_2}^{g_2},\dots,r_{i_t}^{g_t})=0$ in $R$ for all homogeneous elements $r_{i_k}^{g_s}\in R_{g_s}$. The ideal $T_G(R)$ in $\FX$ of all $G$-graded polynomial identities of $R$ is closed under all $G$-graded endomorphisms of $\FX$. Such ideals are called $G$-graded $T$-ideals.

In \cite[Theorem 7]{Berele}, the following was proved for the ordinary polynomial identities:
\begin{thm}
Let $\A=\IP$, then
\[
T(\A)=\bigcap \{T(\A_C)\,|\,C \subseteq P \text{ is a maximal chain }\}.
\]
where $\A_C=I(C,\mathbb{F})$.
\end{thm}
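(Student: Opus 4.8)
The plan is to prove the two inclusions separately. The inclusion $T(\A)\subseteq\bigcap\{T(\A_C)\mid C\subseteq P\text{ maximal chain}\}$ is immediate: each maximal chain $C$ is a subposet of $P$, so $\A_C=I(C,\mathbb{F})$ is a subalgebra of $\A$, and any identity of $\A$ restricts to one of $\A_C$. All the content lies in the reverse inclusion, and here I would first exploit that $\mathbb{F}$ has characteristic zero to reduce to multilinear polynomials: since in characteristic zero every $T$-ideal is generated by its multilinear elements, it suffices to show that a multilinear $\Phi=\sum_{\pi\in S_m}\alpha_\pi x_{\pi(1)}\cdots x_{\pi(m)}$ lying in $T(\A_C)$ for every maximal chain $C$ is an identity of $\A$.

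Next I would localize each evaluation to a finite segment, which is what makes the argument go through even when $\A$ is infinite-dimensional. Fix $f_1,\dots,f_m\in\A$ and $a\preceq b$ in $P$; because $P$ is locally finite, the multiplication formula shows that $\Phi(f_1,\dots,f_m)(a,b)$ is a finite sum involving only the values $f_i(z,w)$ with $z,w\in[a,b]$. Hence it equals $\Phi(\tilde f_1,\dots,\tilde f_m)(a,b)$, where $\tilde f_i$ is the restriction of $f_i$ to the finite segment $[a,b]$, that is, a finite $\mathbb{F}$-combination of basis elements $e_{zw}$ with $z\preceq w$ in $[a,b]$. By multilinearity it then suffices to prove $\Phi(e_{a_1b_1},\dots,e_{a_mb_m})=0$ for every choice of basis elements.

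The heart of the argument is a combinatorial observation about such a basis evaluation. Since $e_{xy}e_{uv}=e_{xv}$ if $y=u$ and $0$ otherwise, a monomial $e_{a_{\pi(1)}b_{\pi(1)}}\cdots e_{a_{\pi(m)}b_{\pi(m)}}$ is nonzero only when $b_{\pi(j)}=a_{\pi(j+1)}$ for every $j$, in which case $a_{\pi(1)}\preceq b_{\pi(1)}\preceq b_{\pi(2)}\preceq\cdots\preceq b_{\pi(m)}$ is a chain $D$ containing every $a_i$ and every $b_i$. Thus, if $\Phi(e_{a_1b_1},\dots,e_{a_mb_m})$ has any surviving monomial, all of its indices lie on a common chain $D$; prolonging $D$ to a maximal chain $C$ of $P$ (possible by the preliminaries), every $e_{a_ib_i}$ lands in the subalgebra $\A_C$, so the whole evaluation takes place inside $\A_C$. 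As $\Phi\in T(\A_C)$, it vanishes; and if no monomial survives it vanishes trivially. Either way $\Phi(e_{a_1b_1},\dots,e_{a_mb_m})=0$, completing the reverse inclusion.

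The obstacle I anticipate is in making the reductions watertight rather than in the combinatorics. One must justify the passage to multilinear identities in characteristic zero, and, because $\A$ need not be finite-dimensional, one cannot simply expand arbitrary elements in the basis $\{e_{xy}\}$; the segment-localization step is precisely what circumvents this, rewriting each entry of $\Phi(f_1,\dots,f_m)$ as a computation inside the finite algebra $I([a,b],\mathbb{F})$ before multilinearity is invoked. The genuinely combinatorial point—that a surviving monomial forces all indices onto a single chain—is short once isolated, and the prolongation of a chain to a maximal chain is exactly the tool supplied earlier.
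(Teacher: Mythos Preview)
Your argument is correct and mirrors the paper's own strategy. The paper does not actually prove this ungraded statement---it cites it from Berele---but its proof of the graded analogue (Theorem~\ref{t-ideal}) proceeds exactly as you do: reduce to multilinear $\Phi$ via $\operatorname{char}\mathbb{F}=0$, evaluate on elements $e_{u_iv_i}$, note that a surviving monomial forces $\{u_1,v_1,\dots,u_m,v_m\}$ to be a chain, and prolong to a maximal chain to reach a contradiction. You are in fact more careful than the paper on one point: the paper's proof simply asserts that multilinearity allows passage to evaluations at the $e_{u_iv_i}$, whereas you justify this via the segment-localization step, which is needed because general elements of $\A$ are infinite formal sums rather than finite $\mathbb{F}$-combinations of the $e_{xy}$.
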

Motivated by this result, we prove its graded version:
\begin{thm}\label{t-ideal}
Let $G$ a group and $\theta\in G_e^P$ an elementary grading on $\A$. Then 
\[
T_G(\A)^{\theta}=\bigcap T_G(\A_{C_i})^{\theta_i}
\]
for all maximal chains $C_i\subset P$.
\end{thm}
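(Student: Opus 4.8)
The plan is to establish the two inclusions separately, the inclusion $T_G(\A)^\theta \subseteq \bigcap_i T_G(\A_{C_i})^{\theta_i}$ being the routine one. For each maximal chain $C_i$, the restriction $\theta_i = \theta|_{C_i}$ lies in $G_e^{C_i}$ and makes $\A_{C_i}=I(C_i,\mathbb{F})$ a $G$-graded subalgebra of $\A^\theta$: extension by zero carries $(\A_{C_i})^{\theta_i}_g$ into $\A^\theta_g$, and each $e_{uv}$ with $u,v\in C_i$ carries the same degree $\theta_u^{-1}\theta_v$ in both algebras. Since any graded identity of an algebra is a graded identity of each of its graded subalgebras, we get $T_G(\A)^\theta \subseteq T_G(\A_{C_i})^{\theta_i}$ for every $i$, hence the inclusion into the intersection. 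For the reverse inclusion, I note that both sides are $G$-graded $T$-ideals, and that in characteristic zero a $T_G$-ideal is generated by its multilinear elements; it therefore suffices to show that every \emph{multilinear} $\Phi$ belonging to the intersection already lies in $T_G(\A)^\theta$.

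I would prove this by contraposition: assuming a multilinear $\Phi \notin T_G(\A)^\theta$, I will exhibit a maximal chain $C$ with $\Phi \notin T_G(\A_C)^{\theta_C}$, where $\theta_C = \theta|_C$. Because the elementary grading is good (Theorem \ref{thm:grading:charac}), each homogeneous component $\A^\theta_g$ is built from the matrix units $e_{uv}$ with $\theta_u^{-1}\theta_v = g$, so by multilinearity a nonzero evaluation may be realized on such matrix units: there are $u_j \preceq v_j$ with $\theta_{u_j}^{-1}\theta_{v_j}=g_j$ and
\[
\Phi(e_{u_1 v_1}, \ldots, e_{u_t v_t}) \neq 0 .
\]
The combinatorial heart of the argument is then the following. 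Writing $\Phi = \sum_{\sigma \in S_t}\alpha_\sigma\, x_{\sigma(1)}^{g_{\sigma(1)}}\cdots x_{\sigma(t)}^{g_{\sigma(t)}}$, each monomial evaluates to $e_{u_{\sigma(1)}v_{\sigma(1)}}\cdots e_{u_{\sigma(t)}v_{\sigma(t)}}$, which is nonzero only when consecutive factors compose, i.e. $v_{\sigma(i)}=u_{\sigma(i+1)}$ for all $i$; in that case it equals the single matrix unit $e_{u_{\sigma(1)}v_{\sigma(t)}}$ and forces the chain $u_{\sigma(1)}\preceq v_{\sigma(1)}=u_{\sigma(2)}\preceq\cdots\preceq v_{\sigma(t)}$ containing every $u_j$ and $v_j$. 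As the matrix units are linearly independent, nonvanishing of the whole sum guarantees that at least one monomial survives, whence the finite set $\{u_j,v_j : 1\le j\le t\}$ is a chain in $P$. I would prolong it to a maximal chain $C$ (every chain prolongs to a maximal one, as recalled in the Preliminaries), observe that all the $e_{u_j v_j}$ are homogeneous elements of $\A_C$ of the same degrees $g_j$ for $\theta_C$, and use that $\A_C$ is a subalgebra of $\A$ so that the evaluation computed in $\A_C$ coincides with the one in $\A$. Thus the same substitution witnesses $\Phi \notin T_G(\A_C)^{\theta_C}$, and the contraposition is complete; combined with the easy inclusion and the characteristic-zero reduction this yields equality.

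The step I expect to demand the most care is the reduction to matrix-unit evaluations when $P$ is infinite, since then $\A^\theta_g$ is not a finite direct sum of the lines $\mathbb{F}e_{uv}$ but rather an infinite product of them, so a single homogeneous element is an infinite combination. I would get around this by arguing entry by entry: for fixed $x\preceq y$ the value $\Phi(r_1,\ldots,r_t)(x,y)$ depends only on the finitely many entries $r_j(u,v)$ with $u,v\in[x,y]$, a finite segment by local finiteness, so expanding each $r_j$ over the matrix units supported in $[x,y]$ turns the computation into a genuine finite multilinear combination of terms $\Phi(e_{u_1v_1},\ldots,e_{u_tv_t})$, and a nonzero entry forces one such term to be nonzero. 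The remaining points—that the restricted grading is indeed the restriction, that the multilinearization is legitimate, and that the surviving chain may be taken maximal—are routine given the characteristic-zero hypothesis and the goodness of elementary gradings.
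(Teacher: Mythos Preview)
Your proof is correct and follows essentially the same approach as the paper's: the easy inclusion via graded subalgebras, then the characteristic-zero reduction to multilinear polynomials, a contrapositive nonzero evaluation on matrix units, and the observation that a surviving monomial forces $\{u_j,v_j\}$ into a chain that one prolongs to a maximal one. Your explicit treatment of the infinite-$P$ case---reducing to finitely many entries via local finiteness of $[x,y]$---is a point the paper's proof passes over with the bare phrase ``because of the multilinearity,'' so your version is, if anything, more careful.
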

\begin{proof}
If $\Phi$ is a graded polynomial identity for $\A^{\theta}$ then, in particular, $\Phi$ is a polynomial identity for $\A_{C_i}^{\theta_i}$, where $\theta_i$ is the elementary grading induced by $\theta$ on $\A_{C_i}$. This proves the inclusion ($\subseteq$). To prove the other inclusion, let 
\[
J=\bigcap T_G(\A_{C_i})^{\theta_i}.
\]
Since \emph{char} $\mathbb{F}=0$, we have that $J$ is generated by the set of its multilinear elements, and we consider 
\[
\Phi=\Phi(f_1,f_2,\dots, f_m) \in J,
\]
a graded multilinear polynomial. Suppose that $\Phi \notin T_G(\A)^{\theta}$. Because of the multilinearity, there must exist elements $e_{u_1v_1},\penalty 0 e_{u_2v_2},\dots,e_{u_mv_m} \penalty 0 \in \A^{\theta}$ such that $\Phi(e_{u_1v_1},e_{u_2v_2},\dots,e_{u_mv_m})\neq 0$.
Since $\Phi$ is multilinear and at least one of its monomials does not vanish, then\\ $\{u_1,v_1,u_2,v_2,\dots u_m,v_m\}$ forms a chain in $P$ which, in turn, is contained in some maximal chain $C_j$. Since $\Phi \in J$, in particular $\Phi \in T_G(\A_{C_j})^{\theta_j}$, therefore $\Phi(e_{u_1v_1},\dots,e_{u_mv_m})= 0$, which is a contradiction.
\end{proof}

The next example shows that it is possible to have $T_G(\A)^{\theta}=T_G(\A)^{\mu}$ even though $\A^{\theta}\ncong\A^{\mu}$ as graded algebras.
\begin{ex}
Let $P=\{p_1,p_2,p_3,p_4\}$ be a poset with $p_1\preceq p_4,\ p_2\preceq p_3,\ p_2\preceq p_4$.
\begin{figure}[h]
\begin{center}\begin{tabular}{cccc}
\begin{tikzpicture}[level/.style={sibling distance = 5cm/#1,
  level distance = 1.5cm}] 
\node [arn_n] (a) [label=left:{\scriptsize $p_4$}] {};
\node [arn_n] (b) [below of=a,label=left:{\scriptsize $p_1$}] {};
\node [arn_n] (c) [right of=a,label=right:{\scriptsize $p_3$}] {};
\node [arn_n] (d) [below of=c,label=right:{\scriptsize $p_2$}] {};
\draw []  (a) edge (b);
\draw []  (a) edge (d);
\draw []  (c) edge (d);
\end{tikzpicture}
&
\begin{tikzpicture}[level/.style={sibling distance = 5cm/#1,
  level distance = 1.5cm}] 
\node [arn_n] (a) [label=left:{\scriptsize $p_4$}] {};
\node [arn_n] (b) [below of=a,label=left:{\scriptsize $p_1$}] {};
\draw []  (a) edge (b);
\end{tikzpicture}
&
\begin{tikzpicture}[level/.style={sibling distance = 5cm/#1,
  level distance = 1.5cm}] 
\node [arn_n] (a) [label=left:{\scriptsize $p_4$}] {};
\node [arn_n] (b) [below of=a,label=left:{\scriptsize $p_2$}] {};
\draw []  (a) edge (b);
\end{tikzpicture}
&
\begin{tikzpicture}[level/.style={sibling distance = 5cm/#1,
  level distance = 1.5cm}] 
\node [arn_n] (a) [label=left:{\scriptsize $p_3$}] {};
\node [arn_n] (b) [below of=a,label=left:{\scriptsize $p_2$}] {};
\draw []  (a) edge (b);
\end{tikzpicture}
\end{tabular}\end{center}
\caption{Poset $P$ and its maximal chains $C_1, C_2$ and $C_3$.}
\end{figure}%
Notice that $C_1=\{p_1,p_4\},C_2=\{p_2,p_4\},C_3=\{p_2,p_3\}$ are the maximal chains in $P$. Consider $G=\{1,h,h^2\}$ and the elementary gradings given by $\theta =(1,h,h^2,1)$ and $\mu =(1,h^2,h,1)$. Since $\Aut (P)$ is trivial, $\theta \nsim \mu$, so $\A^{\theta}$ and $\A^{\mu}$ are not isomorphic as graded algebras. However, their graded polynomial identities coincide. Indeed, the induced grading on $C_1$ is $\theta|_{C_1}=(1,1)$, which is trivial, and it follows that $\A_{C_1}^{\theta|_{C_1}}=\langle e_{11},e_{14},e_{44}\rangle$, so its graded identities are the same as those of the algebra $UT_2(\mathbb{F})$. Moreover, since all maximal chains have length $2$ and $T_G(UT_2(\mathbb{F}))\subseteq T_G(\A_{C_i})^{\theta|_{C_i}}$ for all $i=1,2,3$, we have
\[
T_G(\A_{P})^{\theta}=\bigcap_{i=1,2,3}T_G(\A_{C_i})^{\theta_i}=T_G(UT_2(\mathbb{F})).
\]
The same holds for the grading $\mu$, since $\mu|_{C_1}=(1,1)$. So,
\[
T_G(\A)^{\theta}=T_G(\A)^{\mu}.
\]
\end{ex}

\begin{thm}
Let $G$ be a group and $\theta,\mu \in G_e^P$ endowing elementary gradings on $\A$. Suppose that $\Aut(P)$ acts transitively on the maximal chains of $P$ and $T_G(\A)^{\theta}=T_G(\A)^{\mu}$. Then $\A^\theta$ and $\A^\mu$ are isomorphic as graded algebras.
\end{thm}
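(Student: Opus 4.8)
The plan is to reduce the global statement to a statement about single chains by means of Theorem~\ref{t-ideal}, to translate the hypothesis $T_G(\A)^{\theta}=T_G(\A)^{\mu}$ into combinatorial data attached to the induced gradings on the maximal chains, and finally to reassemble from that data a single automorphism of $P$ witnessing $\theta\sim\mu$, after which Theorem~\ref{isomorphism} gives the graded isomorphism $\A^{\theta}\cong\A^{\mu}$. Throughout I encode an elementary grading $\theta$ as the \emph{edge-coloring} of the Hasse diagram of $P$ that sends a covering pair $x\prec y$ to $\theta_x^{-1}\theta_y\in G$; this coloring is invariant under the left $G^{k}$-action of Remark~\ref{decompositionconected}, and by Theorem~\ref{isomorphism} one has $\theta\sim\mu$ precisely when the colorings of $\theta$ and $\mu$ are carried onto one another by some $\sigma\in\Aut(P)$. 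Thus the target reduces to: produce such a $\sigma$.

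First I would pass to the connected case (transitivity on maximal chains forces the connected components to be permuted among themselves, so by Remark~\ref{decompositionconected} one may argue component by component, taking $k=1$). Transitivity then forces every maximal chain to have a common length $n$, so each $\A_{C}\cong UT_n(\mathbb{F})$ and each induced grading $\theta|_{C}$ is an elementary grading on $UT_n(\mathbb{F})$, recorded by the color sequence $\operatorname{col}_{\theta}(C)\in G^{\,n-1}$ read along $C$. By Theorem~\ref{t-ideal}, $T_G(\A)^{\theta}=\bigcap_{C}T_G(\A_{C})^{\theta|_{C}}$ and likewise for $\mu$. By the classification of elementary gradings on $UT_n(\mathbb{F})$ over an infinite field \cite{Plamen}, the graded $T$-ideal $T_G(\A_{C})^{\theta|_{C}}$ is a complete invariant of $\operatorname{col}_{\theta}(C)$. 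I would then attempt to recover the family $\{\operatorname{col}_{\theta}(C)\}_{C}$ from the intersection; for this I need the auxiliary fact that distinct elementary gradings on $UT_n(\mathbb{F})$ have pairwise incomparable graded $T$-ideals, so that the inclusion-minimal members of $\{T_G(\A_{C})^{\theta|_{C}}\}_{C}$ are read off from $\bigcap_{C}T_G(\A_{C})^{\theta|_{C}}$, and the equality of the two intersections transfers to equality of these extracted color data for $\theta$ and $\mu$.

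The assembly step would proceed by fixing one maximal chain $C_0$, using transitivity to move a $\mu$-chain realizing $\operatorname{col}_{\theta}(C_0)$ onto $C_0$ and normalizing by a left shift so that $\theta$ and $\mu$ agree along $C_0$, and then propagating the agreement of colorings along overlapping maximal chains (every covering pair lies on a maximal chain) to obtain a global $\sigma\in\Aut(P)$ with $\operatorname{col}_{\mu}=\operatorname{col}_{\theta}\circ\sigma$, i.e.\ $\theta\sim\mu$, whence $\A^{\theta}\cong\A^{\mu}$ by Theorem~\ref{isomorphism}.

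The hard part will be this last passage, from the equality of the two intersections to the $\Aut(P)$-equivalence of the two colorings. An intersection of $T$-ideals retains only the set of chain-colorings (indeed only its inclusion-minimal members) and forgets both their multiplicities and their relative placement inside $P$, whereas a graded isomorphism must match the entire colored Hasse diagram. Bridging this gap is exactly where transitivity of $\Aut(P)$ on the maximal chains has to be exploited in an essential and delicate way: the crux is to exclude configurations in which $\theta$ and $\mu$ generate the same intersection of chain-identities while distributing the colors inequivalently among the (automorphism-equivalent) maximal chains, and it is here that I would expect the argument to require the most care.
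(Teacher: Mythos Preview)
Your scaffold matches the paper's---reduce to a connected component, invoke Theorem~\ref{t-ideal}, identify each $\A_{C}$ with $UT_n(\mathbb{F})$, cite \cite{Plamen} on chains, and finish with Theorem~\ref{isomorphism}---but the step you yourself flag as ``the hard part'' is left genuinely open, and the route you propose toward it does not work. Your plan is to recover the family $\{\operatorname{col}_\theta(C)\}_C$ from the intersection $\bigcap_C T_G(\A_C)^{\theta|_C}$, relying on an auxiliary incomparability claim. But even granting that distinct elementary gradings on $UT_n$ yield pairwise incomparable graded $T$-ideals, an intersection of pairwise incomparable ideals does not determine its intersectands; many different families share the same intersection. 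So no minimality argument lets you read off the set of chain colorings (let alone their placement inside $P$) from $T_G(\A)^\theta$. You acknowledge this and stop---that is the gap.

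The paper sidesteps this detour entirely. Rather than trying to invert the intersection, it uses transitivity \emph{at the level of individual chains}: for maximal chains $C_i,C_j$ it picks $\sigma_{ij}\in\Aut(P)$ with $\sigma_{ij}(C_i)=C_j$, transports $\theta_i$ via Theorem~\ref{graded isomorphism} to the grading $\theta_i\sigma_{ij}$ on $C_j$, and argues directly that $\A_{C_j}^{\theta_i\sigma_{ij}}$ and $\A_{C_j}^{\mu_j}$ have the same graded identities; then \cite[Theorem~2.3]{Plamen} on $UT_n$ gives $\theta_i\sigma_{ij}\sim\mu_j$ on that chain. A single global $\sigma\in\Aut(P)$ is then assembled piecewise (equal to $\sigma_{ij}$ on $C_i$ and the identity off $C_i$), after which Remark~\ref{decompositionconected} and Theorem~\ref{isomorphism} finish the proof. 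In short: transport both gradings to a common chain and compare there, rather than trying to extract colorings from the global $T$-ideal; the incomparability lemma is neither needed nor sufficient for what you want.
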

\begin{proof}
First suppose that $P$ has a single connected component. If $\Phi \in T_G(\A)^{\theta}$ ($=T_G(\A)^{\mu}$) then, by Theorem \ref{t-ideal}, we have that $\Phi$ is a polynomial identity for both $\A_{C_i}^{\theta_i}$ and $\A_{C_j}^{\mu_j}$. Since $\Aut(P)$ acts transitively on the maximal chains and $P$ is bounded, each maximal chain has the same length, say $n$, and there exists $\sigma_{ij} \in \Aut(P)$ such that $\sigma_{ij}(C_i)=C_j$. By Theorem \ref{graded isomorphism} we have that $\Phi$ is still a polynomial identity for $\A_{C_j}^{\theta_i\sigma_{ij}}$. Moreover, restricted to a maximal chain $C_j\subset P$, we have that $\A_{C_j}^{\mu_j}\simeq UT_n(\mathbb{F})$. In particular, $\A_{C_j}^{\theta_i\sigma_{ij}}$ and $\A_{C_j}^{\mu_j}$ have the same graded identities. Now, by the second statement of \cite[Theorem 2.3]{Plamen}, $\theta_i\sigma_{ij}\sim\mu_j$. Since $i,j$ are arbitrary, we can choose $\sigma\in \Aut(P)$ such that
\[
\sigma(x)=\begin{cases}
\sigma_{ij}(x), &\mbox{if \ensuremath{x\in C_i}} \\
x, & \mbox{if \ensuremath{x\notin C_i}}. 
\end{cases}
\]
So, by remark \ref{decompositionconected}, $\theta\sigma\sim\mu$ by construction and there exist $h\in G$ such that $\mu=h\theta\sigma$. Proceeding in the same way for each connected component of $P$, we obtain $\mathbf{h}\in G^k$ such that $\mu=\mathbf{h}\theta\sigma$. From Theorem \ref{isomorphism}, it follows that $\A^\theta\cong\A^\mu$.
\end{proof}


\end{document}